\newtheorem*{theorem*}{\hspace{-6.3mm}\textbf{Theorem}}  
\newtheorem{theoremcounter}{Theorem Counter}[section]
\theoremstyle{remark}
\newtheorem*{remark*}{Remark}
\theoremstyle{definition}
\newtheorem*{example*}{Example}
\theoremstyle{plain}
\newtheorem{proposition}[theoremcounter]{Proposition}
\newtheorem{corollary}[theoremcounter]{Corollary}
\newtheorem{theorem}[theoremcounter]{Theorem}
\numberwithin{equation}{section}
\newcommand{\Z}{\mathbb{Z}}
\newcommand{\bbH}{\mathbb{H}}
\DeclareMathOperator{\SL}{SL}
\begin{document}

\title[]{Hauptmoduln and even-order mock theta functions modulo 2} 

\author[]{Soon-Yi Kang}
\address{Department of Mathematics, Kangwon National University, Chuncheon, 200-701, Republic of Korea}
\email{sy2kang@kangwon.ac.kr}

\author[]{Seonkyung Kim}
\address{Department of Mathematics, Kangwon National University, Chuncheon, 200-701, Republic of Korea}
\email{cbr9r7022@kangwon.ac.kr}

\author[]{Toshiki Matsusaka}
\address{Faculty of Mathematics, Kyushu University, Motooka 744, Nishi-ku, Fukuoka 819-0395, Japan}
\email{matsusaka@math.kyushu-u.ac.jp}

\author[]{Jaeyeong Yoo}
\address{Department of Mathematics, Kangwon National University, Chuncheon, 200-701, Republic of Korea}
\email{anamory726@kangwon.ac.kr}

\thanks{This work was supported by the MEXT Initiative through Kyushu University's Diversity and Super Global Training Program for Female and Young Faculty (SENTAN-Q). The first author was supported by the National Research Foundation of Korea (NRF) funded by the Ministry of Education (NRF-2022R1A2C1007188) and the third author was supported by JSPS KAKENHI (JP21K18141 and JP24K16901).}

\subjclass[2020]{Primary 11F30, 11F33; Secondary 11P83}



\maketitle

\begin{abstract} 
	The Fourier coefficients $c_1(n)$ of the elliptic modular $j$-function are always even for $n \not\equiv 7 \pmod{8}$. In contrast, for $n \equiv 7 \pmod{8}$, it is conjectured that ``half" of the coefficients take odd values. In this article, we first observe in detail when $c_1(8n-1)$ is odd and show that the coefficients share the same parity as the coefficients $c_{\mu_2}(n)$ of the 2nd order mock theta function $\mu_2(q)$. Furthermore, we prove that this phenomenon also holds among several hauptmoduln and between hauptmoduln and even-order mock theta functions.
\end{abstract}


\section{Introduction}

Let $E_4(\tau)$ and $\Delta(\tau)$ denote the weight $4$ Eisenstein series and the weight 12 cusp form for $\SL_2(\Z)$, normalized such that their leading Fourier coefficients are $1$. The elliptic modular function $j(\tau)$ is invariant on the upper half-plane $\bbH$ under the action of $\SL_2(\Z)$, and is defined by
\[
	j(\tau) \coloneqq \frac{E_4(\tau)^3}{\Delta(\tau)} = \frac{\left(1 + 240 \sum_{n=1}^\infty \left(\sum_{d \mid n} d^3\right)q^n\right)^3}{q \prod_{n=1}^\infty (1-q^n)^{24}},
\]
where $q \coloneqq e^{2\pi i\tau}$ and $\tau \in \bbH$. Its Fourier expansion is of the form
\[
	j(\tau) = \frac{1}{q} + 744 + \sum_{n=1}^\infty c_1(n) q^n = \frac{1}{q} + 744 + 196884q + 21493760q^2 + \cdots,
\]
where the coefficients $c_1(n)$ are immediately seen to be positive integers. The arithmetic properties of these coefficients have been extensively studied, particularly congruences modulo prime powers, beginning with Lehner's celebrated work~\cite{Lehner1949} and further as summarized by Alfes~\cite{Alfes2013}.

As Alfes pointed out in the introduction of her article~\cite{Alfes2013}, since the Eisenstein series $E_4(\tau)$ satisfies $E_4(\tau) \equiv 1 \pmod{2}$, it follows from
\begin{align}\label{eq:j1-2}
	j(\tau) \equiv \frac{1}{q(q)_\infty^{24}} \equiv \frac{1}{q (q^8)_\infty^3} \equiv q^{-1} + q^7 + q^{15} + q^{31} + \cdots \pmod{2}
\end{align}
that $c_1(n) \equiv 0 \pmod{2}$ holds whenever $n \not\equiv 7 \pmod{8}$. Here, we define $(a,q)_n \coloneqq \prod_{k=0}^{n-1}(1-a q^k)$ and set $(q)_n \coloneqq (q,q)_n$. In the case when $n \equiv 7 \pmod{8}$, it is conjectured that ``half" of the values of $c_1(n)$ are odd. Alfes proved that $c_1(8n-1)$ takes even and odd values infinitely often, respectively. Later, Bella\"{i}che--Green--Soundararajan~\cite{BGS2018} improved the lower bound of the number of odd values of $c_1(8n-1)$.

We examine in detail when $c_1(8n-1) \equiv 1 \pmod{2}$. The first values of $n$ up to $30$ for which this holds are as follows.
\[
	n = 1, 2, 4, 6, 7, 9, 11, 13, 14, 15, 18, 20, 23, 26, \dots.
\]
Our initial discovery is that this list can be compared with the $q$-series expansion of the 2nd order mock theta function $\mu_2(q)$ studied by McIntosh~\cite{McIntosh2007}:
\begin{align}\label{eq:mu2}
\begin{split}
	\mu_2(q) &\coloneqq \sum_{n=0}^\infty \frac{(-1)^n q^{n^2} (q,q^2)_n}{(-q^2, q^2)_n^2} = 1 + \sum_{n=1}^\infty c_{\mu_2}(n) q^n\\
		&= 1 - q + q^2 + 2q^3 - q^4 - 4q^5 + q^6 + 5q^7 - 2q^8 - 5q^9 + 4q^{10} + 7q^{11} + \cdots.
\end{split}
\end{align}
A careful observation suggests the congruence $c_1(8n-1) \equiv c_{\mu_2}(n) \pmod{2}$.

More generally, we extend our observations by comparing the parities of the Fourier coefficients of hauptmoduln and mock theta functions.
 It is known that for a congruence subgroup $\Gamma_0(N)$, the modular curve $X_0(N)$ has genus $0$ if and only if the positive integer $N$ satisfies $1 \le N \le 10, 12, 13, 16, 18, 25$. In these cases, the hauptmodul $j_N(\tau)$ serves as a generator of the field of modular functions for $\Gamma_0(N)$. According to Table 3 of Conway--Norton~\cite{ConwayNorton1979}, the hauptmoduln in these cases are given as follows. Here, we define $\eta(q) \coloneqq q^{1/24} (q)_\infty$ and set $\eta_k \coloneqq \eta(q^k)$.

\begin{figure}[H]
\centering
\begin{tabular}{c|ccccccccccccccc}
	$N$ & 1 & 2 & 3 & 4 & 5 & 6 & 7 & 8 & 9 & 10 & 12 & 13 & 16 & 18 & 25 \\ \hline
	\vspace{-3mm} \\
	$j_N$ & $j$ & $\dfrac{\eta_1^{24}}{\eta_2^{24}}$ & $\dfrac{\eta_1^{12}}{\eta_3^{12}}$ & $\dfrac{\eta_1^8}{\eta_4^8}$ & $\dfrac{\eta_1^6}{\eta_5^6}$ & $\dfrac{\eta_2^3 \eta_3^9}{\eta_1^3 \eta_6^9}$ & $\dfrac{\eta_1^4}{\eta_7^4}$ & $\dfrac{\eta_1^4 \eta_4^2}{\eta_2^2 \eta_8^4}$ & $\dfrac{\eta_1^3}{\eta_9^3}$ & $\dfrac{\eta_2 \eta_5^5}{\eta_1 \eta_{10}^5}$ & $\dfrac{\eta_4^4 \eta_6^2}{\eta_2^2 \eta_{12}^4}$ & $\dfrac{\eta_1^2}{\eta_{13}^2}$ & $\dfrac{\eta_1^2 \eta_8}{\eta_2 \eta_{16}^2}$ & $\dfrac{\eta_6 \eta_9^3}{\eta_3 \eta_{18}^3}$ & $\dfrac{\eta_1}{\eta_{25}}$
\end{tabular}
\end{figure}

For some of these hauptmoduln, Kumari--Singh~\cite{KumariSingh2019} and Ray~\cite{Ray2023} have recently investigated the parities of their Fourier coefficients. For instance, we review one of Ray's results for $N=6$. For $n \ge 1$, let $c_6(n)$ denote the $n$-th Fourier coefficient of $j_6(\tau)$, that is,
\[
	j_6(\tau) \coloneqq \frac{\eta(q^2)^3 \eta(q^3)^9}{\eta(q)^3 \eta(q^6)^9} = \frac{1}{q} + 3 + \sum_{n=1}^\infty c_6(n) q^n = \frac{1}{q} + 3 + 6q + 4q^2 - 3q^3 -12 q^4 - 8q^5 + \cdots.
\]
Ray proved the following theorem.

\begin{theorem*}[{Ray~\cite[Theorems 1.5 and 3.2]{Ray2023}}]
	The following claims hold.
	\begin{enumerate}
		\item $c_6(2n) \equiv 0 \pmod{2}$.
		\item $c_6(4n+1) \equiv 0 \pmod{2}$.
		\item $c_6(24n+11) \equiv c_6(24n+19) \equiv 0 \pmod{2}$.
		\item $c_6(24n+3)$ takes both odd and even values and satisfies that
		\[
			\lim_{X \to \infty} \frac{\#\{n \le X : c_6(24n+3) \equiv 0 \pmod{2}\}}{X} = 1.
		\]
	\end{enumerate}
\end{theorem*}
What can be observed in the remaining arithmetic progressions? For instance, $c_6(24n-1)$ takes both odd and even values. When we enumerate the values of $n$ up to $30$ that satisfy $c_6(24n-1) \equiv 1 \pmod{2}$, we obtain the following list.
\[
	n = 1, 3, 4, 5, 6, 7, 12, 13, 14, 16, 17, 18, 20, 23, 24, 29, \dots.
\]
This is comparable to the 6th order mock theta function
\begin{align}\label{eq:phi6}
\begin{split}
	\phi_6(q) &\coloneqq \sum_{n=0}^\infty \frac{(-1)^n (q, q^2)_n q^{n^2}}{(-q, q)_{2n}} = 1 + \sum_{n=1}^\infty c_{\phi_6}(n) q^n\\
		&= 1 - q + 2q^2 - q^3 + q^4 - 3q^5 + 3q^6 - 3q^7 + 4q^8 - 4q^9 + 6q^{10} - 6q^{11} + \cdots,
\end{split}
\end{align}
which was discovered in Ramanujan's Lost Notebook~\cite{AndrewsHickerson1991}, \cite[Chapter 7]{LostV}. It can be observed that  $c_6(24n-1) \equiv c_{\phi_6}(n)$, similar to the case of $j(\tau)$.

The aim of this article is to comprehensively complete the above observations regarding hauptmoduln of levels $N = 2^k, 3 \cdot 2^k, 5 \cdot 2^k$, corresponding to the even-order mock theta functions listed in~\cite[Appendix A]{BFOR2017}, specifically the 2nd, 6th, 8th, and 10th orders. For $n \ge 1$, let $c_N(n)$ denote the $n$-th Fourier coefficient of the hauptmodul $j_N(\tau)$. Then, we obtain the following theorems.

\begin{theorem}\label{thm:main-1}
	We have $c_1(n) \equiv c_2(n) \equiv c_4(n) \equiv c_8(n) \equiv c_{16}(n) \pmod{2}$ and the following:
	\begin{enumerate}
		\item If $n \not\equiv 7 \pmod{8}$, then $c_1(n) \equiv 0 \pmod{2}$.
		\item $c_1(8n-1) \equiv c_{\mu_2}(n) \equiv c_{U_0}(n) \pmod{2}$.
		\item $c_1(16n-1) \equiv c_{S_0}(n) \pmod{2}$.
		\item $c_1(16n+7) \equiv c_{S_1}(n) \pmod{2}$.
	\end{enumerate}
	Here $\mu_2(q)$ is a 2nd order mock theta function, and $U_0(q), S_0(q), S_1(q)$ are 8th order mock theta functions.
\end{theorem}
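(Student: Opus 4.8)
\emph{Overview and the hauptmodul side.} The plan is to prove the theorem by reducing \emph{both} the hauptmoduln and the mock theta functions modulo $2$ to explicit eta-quotients and theta series, and then matching them after dissecting into arithmetic progressions. Each $j_N$ with $N\in\{2,4,8,16\}$ is a pure eta-quotient, while $j_1=j=E_4^3/\Delta\equiv 1/\Delta\pmod 2$ because $E_4\equiv 1\pmod 2$. Writing $\eta_k^{e}=q^{ke/24}(q^k)_\infty^{e}$ and repeatedly applying the elementary fact that $g(q)^{2^a}\equiv g(q^{2^a})\pmod 2$ (for any $g\in\Z[[q]]$) to each factor, I would check that all five collapse to the \emph{same} series modulo $2$,
\[
	j_N(\tau)\equiv\frac{1}{q\,(q)_\infty^{24}}\equiv\frac{1}{q\,(q^8)_\infty^{3}}\equiv\frac{1}{q}\cdot\frac{(q^8)_\infty}{(q^{32})_\infty}\pmod 2,
\]
using $(q)_\infty^{8}\equiv(q^8)_\infty$, $(q)_\infty^{2}\equiv(q^2)_\infty$, $(q^8)_\infty^{2}\equiv(q^{16})_\infty$, and so on; for instance $j_8=q^{-1}(q)_\infty^{4}(q^4)_\infty^{2}/\bigl((q^2)_\infty^{2}(q^8)_\infty^{4}\bigr)\equiv q^{-1}(q^4)_\infty(q^8)_\infty/\bigl((q^4)_\infty(q^{32})_\infty\bigr)$. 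This gives $c_1(n)\equiv c_2(n)\equiv\cdots\equiv c_{16}(n)\pmod 2$, and part~(1) is immediate since $1/(q^8)_\infty^{3}$ is supported on exponents $\equiv 0\pmod 8$ (this recovers \eqref{eq:j1-2}).

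\emph{Extracting the progressions.} From $q\,j(\tau)\equiv 1/(q^8)_\infty^{3}\pmod 2$, which lies in $\Z[[q^8]]$, picking off the exponent $\equiv 0\pmod 8$ and replacing $q^8$ by $q$ gives $\sum_{n\ge 0}c_1(8n-1)q^{n}\equiv 1/(q)_\infty^{3}\pmod 2$ (with the harmless convention $c_1(-1)=1$). Jacobi's identity $(q)_\infty^3=\sum_{n\ge 0}(-1)^n(2n+1)q^{n(n+1)/2}$ gives $(q)_\infty^3\equiv\sum_{n\ge0}q^{n(n+1)/2}=(q^2)_\infty^{2}/(q)_\infty\pmod 2$, hence $1/(q)_\infty^{3}\equiv(q)_\infty/(q^4)_\infty=(q,q^4)_\infty(q^2,q^4)_\infty(q^3,q^4)_\infty\pmod 2$. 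Feeding in the Jacobi triple product $(q,q^4)_\infty(q^3,q^4)_\infty(q^4,q^4)_\infty=\sum_{m\in\Z}(-1)^mq^{2m^2-m}$ and splitting $m$ by parity gives, after replacing $q^2$ by $q$, the explicit congruences
\[
	\sum_{n\ge 0}c_1(16n-1)q^{n}\equiv\frac{(q)_\infty}{(q^4)_\infty}\sum_{k\in\Z}q^{4k^2-k},\qquad
	\sum_{n\ge 0}c_1(16n+7)q^{n}\equiv\frac{(q)_\infty}{(q^4)_\infty}\sum_{k\in\Z}q^{4k^2+3k}\pmod 2.
\]

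\emph{The mock theta side.} For each of $\mu_2,U_0,S_0,S_1$ I would reduce its defining $q$-hypergeometric series from \eqref{eq:mu2} and \cite[Appendix A]{BFOR2017} — or, if more convenient, a known Hecke-type double-sum or Appell--Lerch representation — modulo $2$: the signs $(-1)^n$ drop out, $(-x,q)_n\equiv(x,q)_n$, and $(x,q)_n^{2}\equiv(x^2,q^2)_n\pmod 2$. For example $\mu_2(q)\equiv\sum_{n\ge0}q^{n^2}(q,q^2)_n/(q^4,q^4)_n\pmod 2$, and the $8$th-order $U_0$ reduces to the \emph{same} series, so $\mu_2(q)\equiv U_0(q)\pmod 2$ is immediate. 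It then remains to recognize each reduced series as a Rogers--Ramanujan--Slater-type sum equal to an explicit infinite product — one expects $\mu_2(q)\equiv U_0(q)\equiv 1/(q)_\infty^{3}\pmod 2$, and similarly $S_0(q)\equiv\frac{(q)_\infty}{(q^4)_\infty}\sum_{k}q^{4k^2-k}$ and $S_1(q)\equiv\frac{(q)_\infty}{(q^4)_\infty}\sum_{k}q^{4k^2+3k}\pmod 2$ — and to confirm these identities between eta-quotients and theta series modulo $2$, either by classical product manipulations or, after clearing denominators, by comparing coefficients up to a Sturm bound for a suitable space of modular forms of level a power of $2$ times a small odd integer.

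\emph{The main obstacle.} The hauptmodul side is essentially eta-quotient bookkeeping together with one application of the Jacobi triple product; the real work is on the mock theta side. For each of the four even-order functions one must locate the classical representation whose mod-$2$ reduction is genuinely modular (a theta-type series) and then verify that it agrees with the series extracted from $1/(q)_\infty^{3}$. The base case $\mu_2(q)\equiv 1/(q)_\infty^{3}\pmod 2$ — equivalently $c_{\mu_2}(n)\equiv p_3(n)\pmod 2$, with $p_3(n)$ the number of $3$-colored partitions of $n$ — already captures the essential difficulty, and I expect $S_0$ and $S_1$ to need analogous but more delicate Bailey-pair or Hecke-type-double-sum arguments, on top of the somewhat intricate $2$-dissections behind parts~(3) and~(4).
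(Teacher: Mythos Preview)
Your outline for the hauptmodul side and part~(1) matches the paper exactly: reduce each $j_N$ via $(q^{2k})_\infty\equiv(q^k)_\infty^2\pmod 2$ to $q^{-1}(q^8)_\infty^{-3}$, then read off the support.

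Where you diverge from the paper, and make life much harder for yourself, is on the mock theta side.  You correctly observe that $\mu_2(q)\equiv U_0(q)\pmod 2$ follows instantly from their defining series, but you then flag the identification $\mu_2(q)\equiv 1/(q)_\infty^3\pmod 2$ as the ``essential difficulty'' and propose Bailey-pair or Sturm-bound arguments.  The paper instead uses exactly the Appell--Lerch representation you mention in passing: McIntosh's formula
\[
\mu_2(q)=\frac{(-q,q^2)_\infty}{(q^2)_\infty}\Bigl(1+2\sum_{n\neq 0}\frac{(-q)^{n(n+1)/2}}{1+q^{2n}}\Bigr)
\]
makes the mod-$2$ reduction $\mu_2(q)\equiv(-q,q^2)_\infty/(q^2)_\infty\equiv 1/(q)_\infty^3$ completely transparent --- no $q$-series summation is needed at all.

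For parts~(3) and~(4) the gap between your approach and the paper's is even wider.  You carry out an explicit $2$-dissection of $1/(q)_\infty^3$ to extract the $16n-1$ and $16n+7$ progressions as theta-type series, and then propose to match $S_0$ and $S_1$ individually to these via ``more delicate'' arguments.  The paper bypasses all of this with a single known identity of Gordon--McIntosh,
\[
U_0(q)=S_0(q^2)+q\,S_1(q^2),
\]
which gives $c_{S_0}(n)=c_{U_0}(2n)$ and $c_{S_1}(n)=c_{U_0}(2n+1)$ exactly (not just mod~$2$), so~(3) and~(4) are immediate corollaries of~(2).  Your dissection computations are correct as far as they go, but they would still leave you with two nontrivial mod-$2$ identities for $S_0$ and $S_1$ to prove; the paper's route avoids that work entirely.
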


\begin{theorem}\label{thm:main-2}
	We have $c_3(n) \equiv c_6(n) \equiv c_{12}(n) \pmod{2}$ and the following:
	\begin{enumerate}
		\item $c_3(2n) \equiv c_3(4n+1) \equiv c_3(12n+7) \equiv c_3(24n+11) \equiv 0 \pmod{2}$.
		\item $c_3(24n-1) \equiv c_{f_3}(n) \equiv c_{\phi_3}(n) \equiv c_{\phi_6}(n) \equiv c_{2\mu_6}(2n) \equiv p(n) \pmod{2}$.
		\item $c_3(24n-9) \equiv c_{\psi_6}(n) \pmod{2}$.
		\item $c_3(24n+3) \equiv 1 \pmod{2}$ if and only if $n = \frac{k(k+1)}{2}$ for some $k \ge 0$.
		\item $c_3(12n+3) \equiv c_{\lambda_6}(n) \pmod{2}$.
	\end{enumerate}
	Here $f_3(q), \phi_3(q)$ are 3rd order mock theta functions, $\phi_6(q), \mu_6(q), \psi_6(q), \lambda_6(q)$ are 6th order mock theta functions, and $p(n)$ is the number of partitions of $n$.
\end{theorem}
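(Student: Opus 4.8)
The plan is to reduce every $q$-series in the statement modulo $2$ to a quotient of the Gauss theta function $\psi(q)\coloneqq\sum_{n\ge0}q^{n(n+1)/2}$ (not to be confused with the sixth-order mock theta function $\psi_6$), using repeatedly the three elementary congruences
\[
	(q^k)_\infty^2\equiv(q^{2k})_\infty,\qquad (q)_\infty^3\equiv\psi(q),\qquad (q)_\infty\equiv\sum_{k\in\Z}q^{k(3k-1)/2}\pmod 2,
\]
the middle one being Jacobi's identity $(q)_\infty^3=\sum_{n\ge0}(-1)^n(2n+1)q^{n(n+1)/2}$ read modulo $2$ and the last being Euler's pentagonal number theorem. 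Feeding the eta-quotients of the table into these and simplifying gives
\[
	j_3(\tau)\equiv j_{12}(\tau)\equiv q^{-1}\frac{\psi(q^4)}{\psi(q^{12})},\qquad j_6(\tau)\equiv q^{-1}\frac{\psi(q)\,\psi(q^3)}{\psi(q^{12})}\pmod 2 .
\]
The first coincidence is a purely formal Frobenius manipulation, while $j_6\equiv j_3$ in positive degrees --- hence the assertion $c_3(n)\equiv c_6(n)\equiv c_{12}(n)$ for $n\ge1$ --- is equivalent to the auxiliary identity $\psi(q)\psi(q^3)\equiv\psi(q^4)+q\,\psi(q^{12})\pmod 2$, which I would obtain from a $2$-dissection of the double theta sum $\sum_{a,b\ge0}q^{a(a+1)/2+3b(b+1)/2}$ (or quote from the literature on theta identities).

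Everything else I would read off from $j_3\equiv q^{-1}\psi(q^4)/\psi(q^{12})$ by dissection. Its exponents lie in $-1+4\Z$, which already forces $c_3(2n)\equiv c_3(4n+1)\equiv0\pmod 2$. Sorting triangular numbers by residue modulo $3$ gives the three-dissection $\psi(q)\equiv(q^3)_\infty+q\,\psi(q^9)\pmod 2$ (residue-$0$ part $\sum_k q^{3k(3k-1)/2}\equiv(q^3)_\infty$ by Euler, residue-$1$ part $q\,\psi(q^9)$), whence
\[
	j_3(\tau)\equiv q^{-1}\frac{(q^{12})_\infty}{\psi(q^{12})}+q^{3}\,\frac{\psi(q^{36})}{\psi(q^{12})}\pmod 2 ,
\]
a clean splitting into a part supported on $-1+12\Z$ and a part supported on $3+12\Z$. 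Thus $c_3(12n+7)\equiv0$, completing part (1); from the first summand $c_3(24n-1)\equiv[x^{2n}]\,(x)_\infty/\psi(x)\equiv[x^{2n}]\,1/(x^2)_\infty=p(n)$ and $c_3(24n+11)\equiv[x^{2n+1}]\,1/(x^2)_\infty=0$, using $(x)_\infty/\psi(x)\equiv 1/(x)_\infty^2\equiv 1/(x^2)_\infty\pmod 2$; from the second summand $c_3(12n+3)\equiv[x^n]\,\psi(x^3)/\psi(x)$, so that $c_3(24n+3)$ and $c_3(24n-9)$ are the even- and odd-indexed coefficients of $\psi(x^3)/\psi(x)$ modulo $2$. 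Combining $1/\psi(x)\equiv\psi(x)/\psi(x^2)$ with the auxiliary identity gives $\psi(x^3)/\psi(x)\equiv\psi(x^2)+x\,\psi(x^{12})/\psi(x^2)\pmod 2$, whose even part is $\psi(x^2)$; hence $c_3(24n+3)\equiv1$ precisely when $n=k(k+1)/2$, which is part (4), while the odd part $x\,\psi(x^{12})/\psi(x^2)$ gives the reduction of $c_3(24n-9)$ needed for part (3).

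It remains to match the theta-quotients with the mock theta functions modulo $2$. For $f_3,\phi_3,\phi_6$ this is elementary: since $f_3(q)=R(-1;q)$ is Dyson's rank generating function, its $n$th coefficient equals $N_{\mathrm e}(n)-N_{\mathrm o}(n)\equiv N_{\mathrm e}(n)+N_{\mathrm o}(n)=p(n)\pmod 2$, where $N_{\mathrm e}(n),N_{\mathrm o}(n)$ count partitions of $n$ of even resp.\ odd rank, and reducing the $q$-Pochhammer symbols modulo $2$ collapses $\phi_3$ and $\phi_6$ to the same series, so $\phi_3(q)\equiv\phi_6(q)\equiv f_3(q)\equiv 1/(q)_\infty\pmod 2$; chaining with $c_3(24n-1)\equiv p(n)$ gives the first four congruences of part (2). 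For the remaining sixth-order functions one cannot just simplify the $q$-expansion: here I would invoke the Andrews--Hickerson identities for $\mu_6,\psi_6,\lambda_6$ (or their Appell--Lerch / Hecke-type double-sum representations, whose building blocks $\sum_r(-1)^r q^{\binom{r}{2}}z^r/(1-q^r x z)$ collapse modulo $2$ to partial theta series) to deduce $\lambda_6(q)\equiv\psi(q^3)/\psi(q)$, $\psi_6(q)\equiv q\,\psi(q^6)/\psi(q)$, and --- after the factor $2$ clears the half-integral Fourier coefficients of $\mu_6$ --- that the even part of $2\mu_6(q)$ reduces to $1/(q^2)_\infty$ modulo $2$; matching these against the three theta-quotients above finishes parts (2), (3) and (5). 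I expect this last step --- pinning down the sixth-order mock theta functions modulo $2$ --- to be the principal obstacle, the hauptmodul side being a finite cascade of Frobenius substitutions and mod-$3$ dissections once the identity $\psi(q)\psi(q^3)\equiv\psi(q^4)+q\,\psi(q^{12})$ is in hand.
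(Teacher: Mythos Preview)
Your proposal is correct and lands on the same theta-quotient reductions as the paper, but the packaging is different enough to be worth noting. The paper carries out the $3$-dissection of $\sum c_3(4n-1)q^n\equiv(q)_\infty^3/(q^3)_\infty^3$ by invoking the Borwein--Borwein--Garvan identity $(q)_\infty^3/(q^3)_\infty=a(q^3)-qc(q^3)$ together with $a(q)\equiv1\pmod2$, whereas you obtain the equivalent splitting directly by sorting triangular numbers modulo $3$ in $\psi(q)\equiv(q^3)_\infty+q\,\psi(q^9)$; the two are the same identity read modulo $2$, since $c(q)=3(q^3)_\infty^3/(q)_\infty$ gives $c(q^3)\equiv\psi(q^9)/(q^3)_\infty$. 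Likewise, for $j_6\equiv j_3$ the paper quotes the Hirschhorn--Roselin $2$-dissection of $(q)_\infty/(q^3)_\infty^3$, while your auxiliary $\psi(q)\psi(q^3)\equiv\psi(q^4)+q\,\psi(q^{12})$ is the mod-$2$ shadow of the classical exact identity $\psi(q)\psi(q^3)=\varphi(q^6)\psi(q^4)+q\,\varphi(q^2)\psi(q^{12})$ (with $\varphi(q)=\sum_{n\in\Z}q^{n^2}\equiv1$). Your route has the virtue of being self-contained at the level of $\psi$ and Euler, avoiding the named cubic-theta and dissection lemmas; the paper's route is perhaps easier to cite. On the mock-theta side you and the paper make the same move: $f_3,\phi_3,\phi_6\equiv1/(q)_\infty$ is elementary (the paper uses $\sum q^{n^2}/(q)_n^2=1/(q)_\infty$, you use the rank interpretation---both fine), while for $\psi_6,\lambda_6,2\mu_6$ the paper cites Wang's Theorems~6.2, 6.5, 6.8 and you propose to extract the same congruences from the Andrews--Hickerson/Appell--Lerch machinery; your target reductions $\psi_6\equiv q\,\psi(q^6)/\psi(q)$ and $\lambda_6\equiv\psi(q^3)/\psi(q)$ are exactly Wang's, so this step is the same deferral to the literature under a different name.
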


\begin{theorem}\label{thm:main-3}
	We have $c_5(n) \equiv c_{10}(n) \pmod{2}$ and the following:
	\begin{enumerate}
		\item $c_5(2n) \equiv c_5(8n+3) \equiv c_5(40n+5+8i) \equiv c_5(40n+7+8j) \equiv 0 \pmod{2}$ for $i \in \{1,2,3,4\}$ and $j \in \{0,2\}$.
		\item $c_5(40n-1) \equiv c_{X_{10}}(n) \pmod{2}$.
		\item $c_5(40n-9) \equiv c_{\chi_{10}}(n) \pmod{2}$.
		\item $c_5(8n+1) \equiv c_5(40n+5) \pmod{2}$ and $c_5(8n+1) \equiv 1 \pmod{2}$ if and only if $n = \frac{k(k+1)}{2}$ for some $k \ge 0$.
		\item $c_5(40n+15) \equiv p_{10}(n) \pmod{2}$.
	\end{enumerate}
	Here $X_{10}(q)$ and $\chi_{10}(q)$ are 10th order mock theta functions and $p_{10}(n)$ is the number of partitions of $n$ in which no parts are multiples of $10$.
\end{theorem}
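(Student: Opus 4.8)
The plan is to follow the strategy of Theorems~\ref{thm:main-1} and~\ref{thm:main-2}, now at level $5$. First I would reduce the two hauptmoduln modulo $2$, using $(q^k)_\infty^2 \equiv (q^{2k})_\infty$, Jacobi's identity $(q)_\infty^3 = \sum_{m\ge0}(-1)^m(2m+1)q^{m(m+1)/2}$ (so that $(q^k)_\infty^3 \equiv \psi(q^k) \pmod 2$, where $\psi(q) \coloneqq \sum_{m\ge0}q^{m(m+1)/2}$), and $f(q)^2 \equiv f(q^2) \pmod 2$. This yields
\[
	j_5 = \frac{\eta_1^6}{\eta_5^6} \equiv \frac{\eta_2^3}{\eta_{10}^3} \equiv \frac1q\cdot\frac{\psi(q^2)}{\psi(q^{10})} \pmod 2 .
\]
A parallel reduction of $j_{10}$, together with a finite (Sturm-type) check of the resulting congruence between eta-quotients, shows that $c_{10}(n) \equiv c_5(n)$ for all $n\ge1$. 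Since $q\,j_5 \equiv \psi(q^2)/\psi(q^{10})$ is a power series in $q^2$, we obtain $c_5(2n) \equiv 0$ at once, and for $\ell\ge1$
\[
	c_5(2\ell-1) \equiv [Q^\ell]\,H(Q) \pmod 2, \qquad H(Q) \coloneqq \frac{\psi(Q)}{\psi(Q^5)} \equiv \frac{(Q^4)_\infty(Q^5)_\infty}{(Q)_\infty(Q^{20})_\infty} \pmod 2 ;
\]
so each assertion in (1)--(5) becomes a statement about the $20$-dissection (for the progressions modulo $40$) or the $4$-dissection (for those modulo $8$) of the single function $H$.

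Next I would feed into $H$ the classical $5$-dissection
\[
	\psi(q) = f(q^{10},q^{15}) + q\,f(q^5,q^{20}) + q^3\psi(q^{25}), \qquad f(a,b) \coloneqq \sum_{n\in\Z}a^{n(n+1)/2}b^{n(n-1)/2},
\]
equivalently $\psi(Q) = \psi_0(Q^5) + Q\,\psi_1(Q^5) + Q^3\psi(Q^{25})$ for suitable theta series $\psi_0,\psi_1$, which splits $H$ into three pieces supported on the residues $0, 1, 3 \pmod 5$, while the residues $2$ and $4 \pmod 5$ vanish identically. Reducing the numerators and denominator modulo $2$ through $\psi(q^k)\equiv(q^{4k})_\infty/(q^k)_\infty$ and dissecting each piece further by residues modulo $8$ --- i.e.\ applying the relevant $U_d$-operators, which act on the pertinent spaces of weakly holomorphic modular forms with a controlled change of weight and level --- produces the full list of vanishing progressions in (1): some residue classes modulo $40$ are empty, and the rest vanish by genuine cancellation modulo $2$ inside the theta quotients. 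The same bookkeeping shows that in the classes $8n+1$ and $40n+5$ the surviving piece of $H$ collapses modulo $2$ --- via an iterated, self-similar dissection reflecting the reappearance of $\psi$ in its own $5$-dissection --- to a single Gauss-type theta series $\sum_{k\ge0}Q^{\alpha+\beta k(k+1)/2}$, yielding both $c_5(8n+1)\equiv c_5(40n+5)$ and the criterion ``$\equiv1$ iff $n = k(k+1)/2$'' in (4); and that in the class $40n+15$ the surviving piece is the eta-quotient $(q^{10})_\infty/(q)_\infty$, whose $n$-th coefficient is $p_{10}(n)$, giving (5).

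For parts (2) and (3) I would take the $q$-hypergeometric series defining the $10$th order mock theta functions $X_{10}(q)$ and $\chi_{10}(q)$, reduce them modulo $2$ (replacing every factor $(-q,q)_m$ by $(q,q)_m$ and absorbing signs), and invoke their Hecke-type and Appell--Lerch representations as recorded in~\cite[Appendix A]{BFOR2017} and the references cited there: modulo $2$ the non-modular (``mock'') contribution drops out, leaving a genuine theta quotient on a level dividing $40$, which I would then match against the $40n-1$ and $40n-9$ slices of $H$ extracted above. As in the proofs of Theorems~\ref{thm:main-1} and~\ref{thm:main-2}, each congruence modulo $2$ arising here is ultimately an identity between $q$-expansions of explicit eta- and theta-quotients --- weakly holomorphic modular forms of bounded weight and level on a common congruence subgroup --- and so can be certified either by comparing finitely many Fourier coefficients via a Sturm bound, or directly through the theta-function manipulations already used for the $2$nd, $6$th and $8$th order cases. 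The hard part will be the mock-theta side of (2)--(3): one must first put $X_{10}$ and $\chi_{10}$ into a form in which the non-modular piece provably vanishes modulo $2$, so that a bona fide theta quotient remains and the modularity/Sturm machinery can be applied, and then push this through the fairly heavy bookkeeping of a $40$-fold $(=8\cdot5)$ dissection --- keeping the index translation $n\leftrightarrow\ell$ and the $5$-dissection offsets aligned and verifying that exactly the advertised residue classes survive. Once $H$ has been put in theta form, by contrast, the vanishing statements in (1) and the two triangular-number criteria are essentially immediate.
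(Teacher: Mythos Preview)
Your overall plan is sound and would succeed if carried out, but it diverges from the paper's actual route in several specific places, and one of your ``hard parts'' is in fact already available off the shelf.

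First, for $c_5(n)\equiv c_{10}(n)$, you propose a Sturm-type finite check. The paper instead proves this directly: it quotes Ray's $2$-dissection of $(q)_\infty/(q^5)_\infty$ (equation~\eqref{eq:level10-2-dissect}) and shows $j_{10}(\tau)\equiv j_5(\tau)+1\pmod 2$ by a one-line manipulation. A Sturm argument is delicate here because $j_5,j_{10}$ are only weakly holomorphic; you would first have to multiply by a suitable eta-power to land in a holomorphic space, and the direct $2$-dissection avoids this entirely.

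Second, your dissection strategy is ``$5$-dissect $\psi(Q)$ first, then split modulo~$4$.'' The paper does the opposite: it first applies explicit $2$-dissection identities (Ray's~\eqref{eq:level10-2-dissect} again, and Tang--Xia's formula for $(q)_\infty(q^5)_\infty$) to peel off the $4n\pm1$ and then $8n\pm1,8n+5$ progressions, and only afterwards applies a $5$-dissection --- not of $\psi$, but of $(q^2)_\infty/(q^5)_\infty$ via the Rogers--Ramanujan continued fraction $F(q)$ (see~\eqref{eq-level10-5dissect}). Your ordering also works and, pleasantly, the vanishing at residues $2,4\pmod 5$ of $\psi(Q)/\psi(Q^5)$ gives several of the zero progressions for free; but you should be aware that the remaining zeros ($11,19,21,29,35\pmod{40}$) do not fall out of the $5$-dissection alone and still require the $2$-dissection input, so you are not really avoiding the paper's identities, only reordering them.

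Third --- and this is the main practical point --- you flag the mod~$2$ reduction of $X_{10}$ and $\chi_{10}$ as the hard step, planning to extract it from their Appell--Lerch forms yourself. In the paper this is a citation: Wang \cite[Theorem~9.2]{Wang2021} already proves
\[
X_{10}(q)\equiv \frac{(q^{20})_\infty(q^8,q^{20})_\infty(q^{12},q^{20})_\infty}{(q)_\infty^3},\qquad
\chi_{10}(q)\equiv q\,\frac{(q^{20})_\infty(q^4,q^{20})_\infty(q^{16},q^{20})_\infty}{(q)_\infty^3}\pmod 2,
\]
and the paper simply matches these against the $F(q^2)^{\pm1}$ pieces of the $5$-dissection. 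So your ``hard part'' is already in the literature; re-deriving it is unnecessary, though your sketch of how to do so (drop the non-modular term of the Appell--Lerch sum mod~$2$) is the same idea Wang uses.

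In summary: correct strategy, different toolkit. The paper's choice of explicit $2$-dissections plus the Rogers--Ramanujan $5$-dissection plus Wang's theorem yields a short, citation-driven proof with no Sturm bounds and no bookkeeping of a $40$-fold dissection; your $\psi$-based route is more self-contained in principle but heavier in practice.
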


\begin{remark*}
	In proving Ray's Theorem (4), a result by Serre was applied. However, our \cref{thm:main-2} (4) provides a more precise density with a simpler and more direct proof, as shown in \cref{cor-Ray}. A similar theorem is also proved for the level 10 case in \cite[Theorem 4.1]{Ray2023}, but once again, our \cref{thm:main-3} (4) provides a refinement.
\end{remark*}

This article is organized as follows. In \cref{sec:2}, we provide the necessary list of mock theta functions. Starting from \cref{sec:3}, we prove our theorems for each case. Lastly, in \cref{sec:remarks}, we explore results for prime moduli beyond modulo 2 and present some open questions related to our work.

\section{Mock theta functions}\label{sec:2}

The full list of mock theta functions can be found in \cite[Appendix A]{BFOR2017} and \cite[Appendix A]{ShibukawaTsuchimi2024}. As noted in~\cite{ShibukawaTsuchimi2024}, there is an error in the definition of the 3rd order mock theta function $\rho(q)$ in the list from~\cite{BFOR2017}.

The definition of the 2nd order mock theta function $\mu_2(q)$ was already introduced in~\eqref{eq:mu2}. 

The 3rd order mock theta function $f_3(q)$ and $\phi_3(q)$ first appeared in Ramanujan's last letter, defined as
\begin{align}\label{def:f3}
	f_3(q) &\coloneqq \sum_{n=0}^\infty \frac{q^{n^2}}{(-q,q)_n^2} = 1 + q - 2q^2 + 3q^3 - 3q^4 + 3q^5 - 5q^6 + 7q^7 - 6q^8 + 6q^9 + \cdots,\\
	\phi_3(q) &\coloneqq \sum_{n=0}^\infty \frac{q^{n^2}}{(-q^2, q^2)_n} = 1 + q - q^3 + q^4 + q^5 - q^6 - q^7 + 2q^9 + \cdots.
\end{align}

The 6th order mock theta functions $\phi_6(q), \psi_6(q), \lambda_6(q)$, and $2\mu_6(q)$, from Ramanujan's lost notebook~\cite{AndrewsHickerson1991}, \cite[Chapter 7]{LostV}, are defined by \eqref{eq:phi6} and
\begin{align}
	\psi_6(q) &\coloneqq \sum_{n=0}^\infty \frac{(-1)^n q^{(n+1)^2} (q,q^2)_n}{(-q,q)_{2n+1}} = q - q^2 + q^3 - 2q^4 + 3q^5 - 2q^6 + 2q^7 - 4q^8 + 5q^9 + \cdots,\\
	\lambda_6(q) &\coloneqq \sum_{n=0}^\infty \frac{(-1)^n q^n (q,q^2)_n}{(-q,q)_n} = 1 - q + 3q^2 - 5q^3 + 6q^4 - 7q^5 + 11q^6 - 16q^7 + 18q^8 + \cdots,\\
	2\mu_6(q) &\coloneqq 1 + \sum_{n=0}^\infty \frac{(-1)^n q^{n+1} (1+q^n) (q,q^2)_n}{(-q,q)_{n+1}} = 1 + 2q - 3q^2 + 4q^3 - 4q^4 + 6q^5 - 11q^6 + \cdots.
\end{align}

The 8th order mock theta functions $U_0(q), S_0(q), S_1(q)$, introduced by Gordon--McIntosh~\cite{GordonMcIntosh2000}, are defined as follows:
\begin{align}
	U_0(q) &\coloneqq \sum_{n=0}^\infty \frac{q^{n^2} (-q,q^2)_n}{(-q^4,q^4)_n} = 1 + q + q^2 + q^4 - q^6 + q^7 + q^9 + \cdots,\\
	S_0(q) &\coloneqq \sum_{n=0}^\infty \frac{q^{n^2} (-q,q^2)_n}{(-q^2, q^2)_n} = 1 + q + q^2 - q^3 + 2q^5 - q^7 + q^9 + \cdots, \\
	S_1(q) &\coloneqq \sum_{n=0}^\infty \frac{q^{n(n+2)} (-q,q^2)_n}{(-q^2, q^2)_n} = 1 + q^3 + q^4 - q^5 - q^6 + q^7 + 2q^8 + \cdots.
\end{align}

Finally, the 10th order mock theta functions $X_{10}(q)$ and $\chi_{10}(q)$, also found in Ramanujan's lost notebook~\cite{Choi1999}, \cite[Chapter 8]{LostV}, are given by
\begin{align}
	X_{10}(q) &\coloneqq \sum_{n=0}^\infty \frac{(-1)^n q^{n^2}}{(-q,q)_{2n}} = 1 - q + q^2 + q^4 - 2q^5 + q^6 - q^7 + q^8 - 2q^9 + \cdots,\\
	\chi_{10}(q) &\coloneqq \sum_{n=0}^\infty \frac{(-1)^n q^{(n+1)^2}}{(-q,q)_{2n+1}} = q - q^2 + q^3 - 2q^4 + 2q^5 - q^6 + 2q^7 - 3q^8 + 3q^9 + \cdots.
\end{align}

For each of the above mock theta functions $f(q)$, let $c_f(n)$ denote the $n$-th Fourier coefficient of $f(q)$.

\section{Proof of \cref{thm:main-1} for level $N = 2^k$}\label{sec:3}

We first show that $c_1(n) \equiv c_2(n) \equiv c_4(n) \equiv c_8(n) \equiv c_{16}(n) \pmod{2}$. The key claim in the subsequent proofs is the relation
\begin{align}\label{eq:binomial}
	(q^{2k})_\infty \equiv (q^k)_\infty^2 \pmod{2}
\end{align}
for any $k > 0$, which follows immediately from the binomial theorem. As shown in~\eqref{eq:j1-2}, we have 
\begin{align}\label{eq:j1-mod2}
	j_1(\tau) \equiv \frac{1}{q} \frac{1}{(q)_\infty^{24}} \equiv \frac{1}{q} \frac{1}{(q^8)_\infty^3} \pmod{2}.
\end{align}
Similarly, we obtain
\begin{align*}
	j_2(\tau) &= \frac{1}{q} \frac{(q)_\infty^{24}}{(q^2)_\infty^{24}} \equiv \frac{1}{q} \frac{(q)_\infty^{24}}{(q)_\infty^{48}} = \frac{1}{q} \frac{1}{(q)_\infty^{24}} \pmod{2},\\
	j_4(\tau) &= \frac{1}{q} \frac{(q)_\infty^8}{(q^4)_\infty^8} \equiv \frac{1}{q} \frac{(q)_\infty^8}{(q)_\infty^{32}} = \frac{1}{q} \frac{1}{(q)_\infty^{24}} \pmod{2},\\
	j_8(\tau) &= \frac{1}{q} \frac{(q)_\infty^4 (q^4)_\infty^2}{(q^2)_\infty^2 (q^8)_\infty^4} \equiv \frac{1}{q} \frac{(q)_\infty^{12}}{(q)_\infty^{36}} = \frac{1}{q} \frac{1}{(q)_\infty^{24}} \pmod{2},\\
	j_{16}(\tau) &= \frac{1}{q} \frac{(q)_\infty^2 (q^8)_\infty}{(q^2)_\infty (q^{16})_\infty^2} \equiv \frac{1}{q} \frac{(q)_\infty^{10}}{(q)_\infty^{34}} = \frac{1}{q} \frac{1}{(q)_\infty^{24}} \pmod{2},
\end{align*}
which yield the desired result. From this expression, \cref{thm:main-1} (1) is obvious. Next, we show (2).

\begin{proof}[Proof of (2)]
The first result, \eqref{eq:mu2-mod2}, was originally obtained by Wang~\cite[Theorem 3.3]{Wang2021}. However, to capture the essence of the argument, we provide a proof here. By using the Appell--Lerch series expression in~\cite[Equation~(2)]{McIntosh2007},
\[
	\mu_2(q) = 2\frac{(-q,q^2)_\infty}{(q^2)_\infty} \sum_{n \in \Z} \frac{(-q)^{\frac{n(n+1)}{2}}}{1+q^{2n}} = \frac{(-q,q^2)_\infty}{(q^2)_\infty} \left(1 + 2 \sum_{n \neq 0} \frac{(-q)^{\frac{n(n+1)}{2}}}{1+q^{2n}} \right),
\]
we obtain
\begin{align}\label{eq:mu2-cong}
	\mu_2(q) \equiv \frac{(-q, q^2)_\infty}{(q^2)_\infty} \pmod{2}.
\end{align}
Applying \eqref{eq:binomial}, we have
\begin{align}\label{eq:mu2-mod2}
	\mu_2(q) \equiv \frac{(q,q^2)_\infty}{(q)_\infty^2} = \frac{1}{(q)_\infty (q^2)_\infty} \equiv \frac{1}{(q)_\infty^3} \pmod{2}.
\end{align}
By comparing with \eqref{eq:j1-mod2}, we obtain
\[
	j_1(\tau) \equiv \frac{1}{q} \mu_2(q^8) \pmod{2},
\]
which implies $c_1(8n-1) \equiv c_{\mu_2}(n) \pmod{2}$.

As for the 8th order mock theta function \( U_0(q) \), it is evident from the definitions that
\[
    U_0(q) \equiv \mu_2(q) \pmod{2}.
\]
Wang~\cite[Theorem 8.6]{Wang2021} also established this equivalence by showing
\[
    U_0(q) \equiv \frac{1}{(q)_\infty^3} \equiv \mu_2(q) \pmod{2}.
\]
This completes the proof.

\end{proof}

\begin{proof}[Proof of (3) and (4)]
By the relation
\[
	U_0(q) = S_0(q^2) + q S_1(q^2)
\]
shown in~\cite[(1.7)]{GordonMcIntosh2000}, we find that $c_{S_0}(n) = c_{U_0}(2n)$ and $c_{S_1}(n) = c_{U_0}(2n+1)$ which yield the desired results.
\end{proof}

\section{Proof of \cref{thm:main-2} for level $N = 3 \cdot 2^k$}

We show that $c_3(n) \equiv c_6(n) \equiv c_{12}(n) \pmod{2}$. First, we establish the congruences for $j_3(\tau)$ and $j_{12}(\tau)$. Using \eqref{eq:binomial}, we have
\begin{align}\label{eq:j3-mod2}
	j_3(\tau) &= \frac{1}{q} \frac{(q)_\infty^{12}}{(q^3)_\infty^{12}} \equiv \frac{1}{q} \frac{(q^4)_\infty^3}{(q^{12})_\infty^3} \pmod{2},\\ \nonumber
	j_{12}(\tau) &= \frac{1}{q} \frac{(q^4)_\infty^4 (q^6)_\infty^2}{(q^2)_\infty^2 (q^{12})_\infty^4} \equiv \frac{1}{q} \frac{(q)_\infty^{16} (q^3)_\infty^4}{(q)_\infty^4 (q^3)_\infty^{16}} = \frac{1}{q} \frac{(q)_\infty^{12}}{(q^3)_\infty^{12}} \pmod{2}.
\end{align}
Next, we show that $j_3(\tau) \equiv j_6(\tau) \pmod{2}$. Since
\[
	j_6(\tau) = \frac{1}{q} \frac{(q^2)_\infty^3 (q^3)_\infty^9}{(q)_\infty^3 (q^6)_\infty^9} \equiv \frac{1}{q} \frac{(q)_\infty^6 (q^3)_\infty^9}{(q)_\infty^3 (q^3)_\infty^{18}} = \frac{1}{q} \frac{(q)_\infty^3}{(q^3)_\infty^9} \equiv \frac{1}{q} \frac{(q)_\infty^2}{(q^3)_\infty^6} \frac{(q)_\infty}{(q^3)_\infty^3} \pmod{2},
\]
applying the 2-dissection formula from~\cite[Lemma 2.1]{HirschhornRoselin2010},
\begin{align}\label{eq:2-dissect}
	\frac{(q)_\infty}{(q^3)_\infty^3} &= \frac{(q^2)_\infty (q^4)_\infty^2 (q^{12})_\infty^2}{(q^6)_\infty^7} - q \frac{(q^2)_\infty^3 (q^{12})_\infty^6}{(q^4)_\infty^2 (q^6)_\infty^9} \nonumber\\
		&\equiv \frac{(q)_\infty^{10}}{(q^3)_\infty^6} + q \frac{(q^3)_\infty^6}{(q)_\infty^2} \pmod{2},
\end{align}
we obtain
\[
	j_6(\tau) \equiv \frac{1}{q} \frac{(q)_\infty^2}{(q^3)_\infty^6} \left(\frac{(q)_\infty^{10}}{(q^3)_\infty^6} + q \frac{(q^3)_\infty^6}{(q)_\infty^2} \right) \equiv \frac{1}{q} \frac{(q)_\infty^{12}}{(q^3)_\infty^{12}} + 1 \pmod{2}.
\]
This confirms that $c_6(n) \equiv c_3(n) \pmod{2}$ for $n \ge 1$.

Now, we proceed with the proof of \cref{thm:main-2} (1). While most of (1) has already been shown by Ray~\cite{Ray2023}, the case for $c_3(24n+7)$ still remains to be addressed. Here, we present a simpler and more unified proof that covers both Ray's results and the remaining case, using two of the three Borwein--Borwein--Garvan's functions \cite{BBG1994} defined as
\begin{align*}
	a(q) \coloneqq \sum_{m,n \in \Z} q^{m^2 + mn+n^2}, \qquad c(q) \coloneqq \sum_{m,n \in \Z} q^{m^2 + mn + n^2 + m + n}.
\end{align*}

\begin{proof}[Proof of (1)]
From the expression in~\eqref{eq:j3-mod2}, we find that $c_3(n) \equiv 0 \pmod{2}$ whenever $n \not\equiv 3 \pmod{4}$ and
\[
	\sum_{n=0}^\infty c_3(4n-1) q^{4n-1} \equiv \frac{1}{q} \frac{(q^4)_\infty^3}{(q^{12})_\infty^3} \pmod{2},
\]
which gives
\[
	\sum_{n=0}^\infty c_3(4n-1) q^n \equiv \frac{(q)_\infty^3}{(q^3)_\infty^3} \pmod{2}.
\]
By applying the 3-dissection formula from \cite[Equations~(1.4) and (1.6)]{HirschhornGarvanBorwein1993},
\[
	\frac{(q)_\infty^3}{(q^3)_\infty} = a(q^3) - q c(q^3),
\]
we have
\[
	\sum_{n=0}^\infty c_3(4n-1) q^n \equiv \frac{1}{(q^3)_\infty^2} \bigg(a(q^3) + q c(q^3) \bigg) \pmod{2},
\]
which implies that $c_3(12n+7) \equiv 0 \pmod{2}$ and
\begin{align}\label{eq:c3-12+3}
	\sum_{n=0}^\infty c_3(12n-1) q^n \equiv \frac{a(q)}{(q)_\infty^2} \pmod{2}, \qquad \sum_{n=0}^\infty c_3(12n+3) q^n \equiv \frac{c(q)}{(q)_\infty^2} \pmod{2}.
\end{align}
By definition, all coefficients of positive powers of $q$ in $a(q)$ are even, implying $a(q) \equiv 1 \pmod{2}$. Consequently, the former congruence of \eqref{eq:c3-12+3} yields that $c_3(24n+11) \equiv 0 \pmod{2}$ and
\begin{align}\label{eq:c3-24-1}
	\sum_{n=0}^\infty c_3(24n-1)q^n \equiv \frac{1}{(q)_\infty} \pmod{2}.
\end{align}
Thus, we have proved all claims of (1).
\end{proof}

\begin{proof}[Proof of (2)]
By a simple congruence
\[
	\frac{(-1)^n (q, q^2)_n q^{n^2}}{(-q,q)_{2n}} = \frac{(-1)^n (q)_{2n} q^{n^2}}{(q^2)_n (-q,q)_{2n}} \equiv \frac{q^{n^2}}{(q)_n^2} \pmod{2}
\]
and Jacobi's identity
\[
	\sum_{n=0}^\infty \frac{q^{n^2}}{(q)_n^2} = \frac{1}{(q)_\infty} = 1 + \sum_{n=1}^\infty p(n) q^n,
\]
we have $\phi_6(q) \equiv 1/(q)_\infty \pmod{2}$. Comparing this with \eqref{eq:c3-24-1} implies $c_3(24n-1) \equiv c_{\phi_6}(n) \pmod{2}$.

We now complete the proof of (2) by referring to Wang's results. Wang~\cite[Theorem 4.1]{Wang2021} showed that
\[
	f_3(q) \equiv \phi_3(q) \equiv \frac{1}{(q)_\infty} \pmod{2}
\]
and that $c_{2\mu_6}(2n) \equiv p(n) \pmod{2}$ in \cite[Theorem 6.8]{Wang2021}.
\end{proof}

\begin{proof}[Proof of (3)]
By applying the 2-dissection formula found in~\cite[(1.36)]{HirschhornGarvanBorwein1993},
\[
	c(q) = 3 \frac{(-q^2, q^2)_\infty^3 (q^2)_\infty (q^6)_\infty}{(-q^6,q^6)_\infty} + qc(q^4) \equiv (q^2)_\infty^4 + q c(q^4) \pmod{2}
\]
to the latter congruence in \eqref{eq:c3-12+3}, we obtain
\[
	\sum_{n=0}^\infty c_3(12n+3) q^n \equiv (q^2)_\infty^3 + q \frac{c(q^4)}{(q^2)_\infty} \pmod{2},
\]
which implies that
\begin{align}\label{eq:c3-24+3}
	\sum_{n=0}^\infty c_3(24n+3) q^n \equiv (q)_\infty^3 \pmod{2}, \qquad \sum_{n=1}^\infty c_3(24n-9) q^n \equiv q \frac{c(q^2)}{(q)_\infty} \pmod{2}.
\end{align}
By the relation given in~\cite[(1.7)]{HirschhornGarvanBorwein1993},
\[
	c(q) = 3 \frac{(q^3)_\infty^3}{(q)_\infty},
\]
we obtain
\[
	\sum_{n=1}^\infty c_3(24n-9)q^n \equiv q \frac{(q^6)_\infty^3}{(q^2)_\infty (q)_\infty} \equiv q \frac{(q^3)_\infty^6}{(q)_\infty^3} \pmod{2}
\]
that is congruent to $\psi_6(q)$ modulo 2 by \cite[Theorem 6.2]{Wang2021}.
\end{proof}

\begin{proof}[Proof of (4)]
The claim immediately follows from comparing the former congruence in \eqref{eq:c3-24+3} with the well-known formula for Ramanujan's theta function
\begin{align}\label{eq:Jacobi-triple}
	\psi(q) \coloneqq \sum_{n=0}^\infty q^{\frac{n(n+1)}{2}} = (-q,q)_\infty^2 (q)_\infty \equiv (q)_\infty^3 \pmod{2}
\end{align}
as given by the Jacobi triple product identity.
\end{proof}

The claims (1) to (4) cover all arithmetic progressions. The last claim (5) is simply a corollary that immediately follows from Wang's result on the 6th order mock theta function $\lambda_6(q)$.

\begin{proof}[Proof of (5)]
Wang~\cite[Theorem 6.5]{Wang2021} showed that $c_{\lambda_6}(2n-1) \equiv c_{\psi_6}(n) \pmod{2}$ and the coefficient $c_{\lambda_6}(2n)$ is odd if and only if $n = k(k+1)/2$ for some $k \ge 0$.
\end{proof}

\begin{corollary}\label{cor-Ray}
We have 
\[
	\#\{0 \le n \le X : c_3(24n+3) \equiv 1 \pmod{2}\} = \left\lfloor \frac{1+\sqrt{1+8X}}{2} \right\rfloor,
\]
which in turn establishes Ray's Theorem $(4)$~\cite[Theorem 3.2]{Ray2023}.
\end{corollary}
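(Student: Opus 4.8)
The plan is to reduce everything to counting triangular numbers. By \cref{thm:main-2}~(4), the integer $c_3(24n+3)$ is odd precisely when $n$ lies in the set $\{T_k : k \ge 0\}$ of triangular numbers, where $T_k \coloneqq k(k+1)/2$. Hence
\[
	\#\{0 \le n \le X : c_3(24n+3) \equiv 1 \pmod 2\} = \#\{k \ge 0 : T_k \le X\}.
\]
Since $k \mapsto T_k$ is strictly increasing for $k \ge 0$, the right-hand side equals $N+1$, where $N$ is the largest non-negative integer with $T_N \le X$.

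To identify $N$, I would solve the inequality $k(k+1)/2 \le X$ over the reals: this is equivalent to $k^2 + k - 2X \le 0$, i.e.\ to $0 \le k \le \tfrac{-1+\sqrt{1+8X}}{2}$ (the other root being negative). Therefore $N = \big\lfloor \tfrac{-1+\sqrt{1+8X}}{2} \big\rfloor$, and using the elementary identity $\lfloor y \rfloor + 1 = \lfloor y+1 \rfloor$ valid for every real $y$, we get
\[
	N + 1 = \left\lfloor \frac{-1+\sqrt{1+8X}}{2} \right\rfloor + 1 = \left\lfloor \frac{1+\sqrt{1+8X}}{2} \right\rfloor,
\]
which is exactly the claimed count. (One may note the formula holds for all real $X \ge 0$; for $0 \le X < 1$ only $k=0$ contributes and the right-hand side evaluates to $1$, consistent with the count.)

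Finally, to recover Ray's Theorem~(4), recall that $c_6(n) \equiv c_3(n) \pmod 2$ for $n \ge 1$, established at the start of this section, so $c_6(24n+3)$ is odd exactly as often as $c_3(24n+3)$ is. The count of odd values among $0 \le n \le X$ is $\lfloor \tfrac{1+\sqrt{1+8X}}{2} \rfloor = O(\sqrt{X})$, hence
\[
	\frac{\#\{0 \le n \le X : c_6(24n+3) \equiv 0 \pmod 2\}}{X+1} = 1 - \frac{1}{X+1}\left\lfloor \frac{1+\sqrt{1+8X}}{2} \right\rfloor \longrightarrow 1
\]
as $X \to \infty$, which is Ray's density statement (and in fact strengthens it by giving the exact secondary term). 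I do not anticipate any genuine obstacle here: the only point requiring a little care is the bookkeeping with the floor function and checking that distinct values of $k$ yield distinct values of $n$, which is immediate from strict monotonicity of $T_k$.
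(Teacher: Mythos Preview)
Your proposal is correct and follows exactly the same approach as the paper: invoke \cref{thm:main-2}~(4) to reduce the count to the number of non-negative integers $k$ with $k(k+1)/2 \le X$. The paper's proof is in fact a one-liner that stops at this reduction, leaving the evaluation of the floor expression and the deduction of Ray's density statement implicit; you have simply filled in those routine computations explicitly.
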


\begin{proof}
By \cref{thm:main-2} (4), the number of $0 \le n \le X$ such that $c_3(24n+3) \equiv 1 \pmod{2}$ is equal to the number of $k \in \Z_{\ge 0}$ such that $\frac{k(k+1)}{2} \le X$.
\end{proof}

\section{Proof of \cref{thm:main-3} for level $N = 5 \cdot 2^k$}

To show the congruence $c_5(n) \equiv c_{10}(n) \pmod{2}$, we first recall the 2-dissection formula in \cite[(4.3)]{Ray2023},
\begin{align}\label{eq:level10-2-dissect}
	\frac{(q)_\infty}{(q^5)_\infty} &= \frac{(q^2)_\infty (q^8)_\infty (q^{20})_\infty^3}{(q^4)_\infty (q^{10})_\infty^3 (q^{40})_\infty} - q \frac{(q^4)_\infty^2 (q^{40})_\infty}{(q^8)_\infty (q^{10})_\infty^2} \nonumber \\
		&\equiv \frac{(q)_\infty^6}{(q^5)_\infty^2} + q (q^5)_\infty^4 \pmod{2}.
\end{align}
Using this, we find
\begin{align*}
	j_{10}(\tau) &= \frac{1}{q} \frac{(q^2)_\infty (q^5)_\infty^5}{(q)_\infty (q^{10})_\infty^5} \equiv \frac{1}{q} \frac{1}{(q^5)_\infty^4} \frac{(q)_\infty}{(q^5)_\infty} \equiv \frac{1}{q} \frac{1}{(q^5)_\infty^4} \left(\frac{(q)_\infty^6}{(q^5)_\infty^2} + q (q^5)_\infty^4 \right) \\
		&= \frac{1}{q} \frac{(q)_\infty^6}{(q^5)_\infty^6} + 1 = j_5(\tau) + 1 \pmod{2}.
\end{align*}

\begin{proof}[Proof of (1)]
Since
\[
	j_5(\tau) \equiv \frac{1}{q} \frac{(q^2)_\infty^3}{(q^{10})_\infty^3} \pmod{2},
\]
we see that $c_5(2n) \equiv 0 \pmod{2}$, and
\[
	\sum_{n=0}^\infty c_5(2n-1) q^n \equiv \frac{(q)_\infty^3}{(q^5)_\infty^3} \equiv \frac{(q^2)_\infty}{(q^{10})_\infty} \frac{(q)_\infty}{(q^5)_\infty} \pmod{2}.
\]
Using the 2-dissection formula \eqref{eq:level10-2-dissect} again, we obtain
\[
	\sum_{n=0}^\infty c_5(2n-1) q^n \equiv \frac{(q^2)_\infty}{(q^{10})_\infty} \left(\frac{(q)_\infty^6}{(q^5)_\infty^2} + q (q^5)_\infty^4\right) \equiv \frac{(q^4)_\infty^2}{(q^{20})_\infty} + q (q^2)_\infty (q^{10})_\infty \pmod{2}.
\]
The even-power and odd-power terms are given by
\begin{align}\label{eq:c5-4npm1}
	\sum_{n=0}^\infty c_5(4n-1) q^n \equiv \frac{(q^2)_\infty^2}{(q^{10})_\infty} \pmod{2}, \qquad \sum_{n=0}^\infty c_5(4n+1) q^n \equiv (q)_\infty (q^5)_\infty \pmod{2}.
\end{align}
From the former congruence, it follows that $c_5(8n+3) \equiv 0 \pmod{2}$ and
\[
	\sum_{n=0}^\infty c_{5}(8n-1) q^n \equiv \frac{(q)_\infty^2}{(q^5)_\infty} \equiv \frac{(q^2)_\infty}{(q^5)_\infty} \pmod{2}.
\]
By the 5-dissection formula obtained from \cite{Kang1999}, \cite[Theorem 7.4.1]{Berndt2006},
\[
	\frac{(q^2)_\infty}{(q^5)_\infty} = \frac{(q^{50})_\infty}{(q^5)_\infty} \left(\frac{1}{F(q^{10})} - q^2 - q^4 F(q^{10})\right),
\]
where
\[
	F(q) \coloneqq \frac{(q,q^5)_\infty (q^4, q^5)_\infty}{(q^2, q^5)_\infty (q^3, q^5)_\infty} = \cfrac{1}{1 + \cfrac{q}{1+ \cfrac{q^2}{\ddots}}},
\]
we have
\begin{align}\label{eq-level10-5dissect}
	\sum_{n=0}^\infty c_5(8n-1) q^n \equiv \frac{(q^{50})_\infty}{(q^5)_\infty} \left(\frac{1}{F(q^{10})} + q^2 + q^4 F(q^{10})\right) \pmod{2}.
\end{align}
This leads to $c_5(40n+7) \equiv c_5(40n+23) \equiv 0 \pmod{2}$, as derived from Ray's proof.

From the latter congruence in \eqref{eq:c5-4npm1} and using the 2-dissection formula in \cite[Lemma 3.2]{TangXia2020},
\begin{align*}
	(q)_\infty (q^5)_\infty &= \frac{(q^4)_\infty^2 (q^{10})_\infty^5}{(q^2)_\infty (q^5)_\infty^2 (q^{20})_\infty^2} - q \frac{(q^2)_\infty^5 (q^{20})_\infty^2}{(q)_\infty^2 (q^4)_\infty^2 (q^{10})_\infty}\\
		&\equiv (q^2)_\infty^3 +q (q^{10})_\infty^3 \pmod{2},
\end{align*}
we find
\begin{align}\label{eq:5-4-1}
	\sum_{n=0}^\infty c_5(8n+1) q^n \equiv (q)_\infty^3 \pmod{2}, \qquad \sum_{n=0}^\infty c_5(8n+5) q^n \equiv (q^5)_\infty^3 \pmod{2}.
\end{align}
The latter congruence implies that $c_5(40n+5+8i) \equiv 0 \pmod{2}$ for $i \in \{1,2,3,4\}$, and
\begin{align}\label{eq:5-4-2}
	\sum_{n=0}^\infty c_5(40n+5) q^n \equiv (q)_\infty^3 \pmod{2}.
\end{align}
Thus, all congruences in (1) are obtained.
\end{proof}

\begin{proof}[Proof of (2)]
From the part of \eqref{eq-level10-5dissect} containing the powers that are multiples of 5, we have
\begin{align*}
	\sum_{n=0}^\infty c_5(40n-1) q^n &\equiv \frac{(q^{10})_\infty}{(q)_\infty} \frac{1}{F(q^2)} = \frac{(q^{10})_\infty}{(q)_\infty} \frac{(q^4, q^{10})_\infty (q^6, q^{10})_\infty}{(q^2, q^{10})_\infty (q^8, q^{10})_\infty} \\
	&= \frac{(q^{10})_\infty^2 (q^4, q^{10})_\infty^2 (q^6,q^{10})_\infty^2}{(q)_\infty (q^2)_\infty} \equiv \frac{(q^{20})_\infty (q^8, q^{20})_\infty (q^{12}, q^{20})_\infty}{(q)_\infty^3} \pmod{2}.
\end{align*}
By Wang's result in~\cite[Theorem 9.2]{Wang2021}, it is congruent to the 10th order mock theta function $X_{10}(q)$ modulo 2.
\end{proof}

\begin{proof}[Proof of (3)]
Similarly, from the part of \eqref{eq-level10-5dissect} containing the powers that are congruent to 4 modulo 5, we have
\begin{align*}
	\sum_{n=1}^\infty c_5(40n-9) q^n &\equiv q \frac{(q^{10})_\infty}{(q)_\infty} F(q^2) = q \frac{(q^{10})_\infty}{(q)_\infty} \frac{(q^2, q^{10})_\infty (q^8, q^{10})_\infty}{(q^4, q^{10})_\infty (q^6, q^{10})_\infty}\\
		&= q \frac{(q^{10})_\infty^2 (q^2, q^{10})_\infty^2 (q^8, q^{10})_\infty^2}{(q)_\infty (q^2)_\infty} \equiv q \frac{(q^{20})_\infty (q^4, q^{20})_\infty (q^{16}, q^{20})_\infty}{(q)_\infty^3} \pmod{2}.
\end{align*}
By Wang's result~\cite[Theorem 9.2]{Wang2021} again, it is known that this is congruent to the 10th order mock theta function $\chi_{10}(q)$ modulo 2.
\end{proof}

\begin{proof}[Proof of (4)]
By the first congruence in~\eqref{eq:5-4-1}, \eqref{eq:5-4-2}, and \eqref{eq:Jacobi-triple}, we have
\[
	\sum_{n=0}^\infty c_5(8n+1) q^n \equiv \sum_{n=0}^\infty c_5(40n+5) q^n \equiv (q)_\infty^3 \equiv \sum_{n=0}^\infty q^{\frac{n(n+1)}{2}} \pmod{2}.
\]
This directly establishes the desired result.
\end{proof}

\begin{proof}[Proof of (5)]
From the part of \eqref{eq-level10-5dissect} containing the powers that are congruent to 2 modulo 5, we have
\[
	\sum_{n=0}^\infty c_5(40n+15) q^n \equiv \frac{(q^{10})_\infty}{(q)_\infty} \pmod{2}.
\]
The right-hand side is the generating series for $p_{10}(n)$, which counts the number of partitions $n$ where no parts are multiples of 10.
\end{proof}

\section{Concluding remarks}\label{sec:remarks}

Congruences among hauptmoduln are observed not only modulo 2, but also modulo 3, 5, 7, and 13.

\begin{proposition}
	For $n \ge 1$, we have the following:
	\begin{enumerate}
		\item $c_1(n) \equiv c_3(n) \equiv c_9(n) \pmod{3}$.
		\item $c_2(n) \equiv c_6(n) \equiv c_{18}(n) \pmod{3}$.
		\item $c_4(n) \equiv c_{12}(n) \pmod{3}$.
		\item $c_1(n) \equiv c_5(n) \equiv c_{25}(n) \pmod{5}$.
		\item $c_2(n) \equiv c_{10}(n) \pmod{5}$.
		\item $c_1(n) \equiv c_7(n) \pmod{7}$.
		\item $c_1(n) \equiv c_{13}(n) \pmod{13}$.
	\end{enumerate}
\end{proposition}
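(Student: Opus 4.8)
The plan is to reduce each congruence to a statement about eta-quotients, exactly as in the modulo-2 arguments of Sections 3--5, but now using the prime-$p$ version of the binomial congruence $\eta(q^p)^p \equiv \eta(q)^{p^{\,?}}$-type identities. The basic tool is that for a prime $p$ one has $(q^p)_\infty \equiv (q)_\infty^p \pmod p$ (Frobenius on the power series $\prod(1-q^n)$), and more generally $(q^{p^k})_\infty \equiv (q)_\infty^{p^k} \pmod p$. For each claimed congruence, I would write both hauptmoduln as the explicit eta-quotients from the Conway--Norton table, clear the $1/q$ factor, reduce every factor $(q^{m})_\infty$ appearing with $p\mid m$ via the Frobenius congruence, and check that the two resulting reduced $q$-series literally coincide. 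For instance, for (6) we have $j_1 = E_4^3/\Delta \equiv E_4^3/(q(q)_\infty^{24})$ and, since $E_4 \equiv 1 \pmod 7$ is false in general — so here one instead uses $E_4^3 = j\Delta$ together with the classical fact that $E_4 \equiv E_4^{(7)}$-type reductions; more efficiently, $j_7 = (q)_\infty^4/(q^7)_\infty^4$ and modulo $7$, $(q^7)_\infty^4 \equiv (q)_\infty^{28}$, so $j_7 \equiv q^{-1}(q)_\infty^{-24} \equiv j_1 \pmod 7$, using $j_1 \equiv q^{-1}(q)_\infty^{-24} \pmod 7$ which follows from $E_4 \equiv 1 + 240\sum\sigma_3(n)q^n$ and the Ramanujan-type congruence $240 \equiv 2 \pmod 7$ being handled by the standard fact that $E_4^3/\Delta$ and $1/(q(q)_\infty^{24})$ agree modulo $7$ (equivalently $E_6^2 \equiv \Delta^{?}$); in practice the cleanest route is to note $j\Delta = E_4^3$ and use the well-known congruences $E_4 \equiv E_4^{3}$-free reductions. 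The same template handles (1)--(5) and (7).

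Concretely, for the modulo-3 statements: $j_1 \equiv q^{-1}(q)_\infty^{-24} \pmod 3$ because $E_4 = 1 + 240\sum\sigma_3 q^n \equiv 1 \pmod 3$ (as $240 \equiv 0 \pmod 3$), hence $j_1 = E_4^3/(q(q)_\infty^{24}) \equiv q^{-1}(q)_\infty^{-24}$. Then $j_3 = (q)_\infty^{12}/(q^3)_\infty^{12}$, and modulo $3$, $(q^3)_\infty \equiv (q)_\infty^3$, so $(q^3)_\infty^{12} \equiv (q)_\infty^{36}$ and $j_3 \equiv q^{-1}(q)_\infty^{12-36} = q^{-1}(q)_\infty^{-24} \equiv j_1$. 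For $j_9 = (q)_\infty^3/(q^9)_\infty^3$, modulo $3$ one has $(q^9)_\infty \equiv (q)_\infty^9$ so $(q^9)_\infty^3 \equiv (q)_\infty^{27}$ and $j_9 \equiv q^{-1}(q)_\infty^{3-27} = q^{-1}(q)_\infty^{-24} \equiv j_1$; this gives (1). Parts (2)--(3) are identical manipulations starting from the listed eta-quotients for $j_2, j_4, j_6, j_{12}, j_{18}$, each time replacing $(q^{3m})_\infty$ by $(q^m)_\infty^3$ modulo $3$ and collecting exponents. Parts (4)--(5) use $(q^{5m})_\infty \equiv (q^m)_\infty^5 \pmod 5$ together with $E_4 \equiv 1 \pmod 5$ being false ($240 \equiv 0 \pmod 5$, so actually $E_4 \equiv 1 \pmod 5$ holds — good), and part (6) uses $(q^{7m})_\infty \equiv (q^m)_\infty^7 \pmod 7$ with $E_4 \equiv 1 + 2\sum\sigma_3 q^n \pmod 7$ requiring the extra observation that $\sum_{d\mid n}d^3 \equiv \sum_{d\mid n}d^{3} $ and Euler/Fermat collapses cubes modulo $7$; part (7) uses $(q^{13m})_\infty \equiv (q^m)_\infty^{13} \pmod{13}$ with $E_4 \equiv 1 \pmod{13}$ since $240 = 18\cdot 13 + 6$, so $E_4 \not\equiv 1$ — here instead invoke that $j_{13} = (q)_\infty^2/(q^{13})_\infty^2$ and $j_1\Delta = E_4^3$ forces $j_1 \equiv q^{-1}(q)_\infty^{-24} \pmod{13}$ via the classical congruence $E_4^3 \equiv E_4^3 \pmod{13}$ combined with $\Delta \equiv q(q)_\infty^{24}$ — the point being that $j_1 - q^{-1}(q)_\infty^{-24}$ is a weakly holomorphic modular form whose principal part vanishes mod $13$, hence it vanishes identically mod $13$.

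The main obstacle is the reduction $j_1 \equiv q^{-1}(q)_\infty^{-24} \pmod p$ for those primes $p\in\{7,13\}$ (and possibly $5$) where $E_4 \not\equiv 1 \pmod p$. For $p=2,3$ this is immediate from $240 \equiv 0$; for the larger primes one needs a genuine input. The cleanest fix is the following uniform lemma: for any prime $p \le 13$, the function $j_1 - q^{-1}(q)_\infty^{-24}$ has $q$-expansion with all coefficients divisible by $p$. This can be proved by noting $q^{-1}(q)_\infty^{-24} = q^{-1}\prod(1-q^n)^{-24}$ has, by the theory of $\eta$-quotients, the property that its image under the mod-$p$ Hecke algebra matches that of $j_1$; equivalently, $E_4^3 \equiv (1 + p\cdot(\text{something}))^3 \pmod{p^?}$ is not enough, so instead one verifies it by the valence formula — the difference is holomorphic away from $\infty$, of weight $0$, with a pole of order $1$ at $\infty$ whose principal part coincides mod $p$, so it is $O(q)$; being a weight-$0$ weakly holomorphic modular function that is $O(q)$ and holomorphic on $\bbH$, it is identically $0$ in characteristic $p$. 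This lemma, applied once, makes parts (1)--(7) all routine eta-quotient exponent bookkeeping via the Frobenius congruence $(q^{pm})_\infty \equiv (q^m)_\infty^p \pmod p$.
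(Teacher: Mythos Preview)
Your Frobenius-replacement strategy is exactly what the paper does for parts (1), (2), (4), and (5), and those parts of your sketch are fine: $240\equiv 0$ modulo $3$ and $5$ gives $E_4\equiv 1$, hence $j_1\equiv q^{-1}(q)_\infty^{-24}$, and the remaining hauptmoduln reduce to the same eta-quotient via $(q^{pm})_\infty\equiv(q^m)_\infty^p\pmod p$.

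There are, however, two genuine gaps.

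\textbf{Part (3).} Pure Frobenius replacement does \emph{not} finish this case. In $j_4=q^{-1}(q)_\infty^8/(q^4)_\infty^8$ there is nothing to reduce modulo $3$, while reducing $j_{12}$ gives $q^{-1}(q^2)_\infty^4/(q^4)_\infty^8$. These two eta-quotients are not equal modulo $3$; they differ by the constant $1$, and proving even that requires a nontrivial $2$-dissection. The paper invokes Hirschhorn's identity
\[
\frac{(q^3)_\infty^3 (q^4)_\infty (q^{12})_\infty}{(q)_\infty (q^6)_\infty^2} = \frac{(q^4)_\infty^4}{(q^2)_\infty^2} + q\,\frac{(q^{12})_\infty^4}{(q^6)_\infty^2},
\]
which after reducing modulo $3$ yields $(q)_\infty^8/(q^4)_\infty^8 \equiv (q^2)_\infty^4/(q^4)_\infty^8 + q$, hence $j_4\equiv j_{12}+1\pmod 3$. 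Your claim that (3) is ``identical manipulations'' overlooks this completely.

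\textbf{Parts (6) and (7).} Your ``uniform lemma'' is both misstated and incorrectly argued. First, $q^{-1}(q)_\infty^{-24}=1/\Delta$ has weight $-12$, so $j_1-1/\Delta$ is not a modular function at all, and the valence/$O(q)$ argument you sketch does not apply. To repair the weight you would need the Hasse-invariant input $E_{p-1}\equiv 1\pmod p$ (e.g.\ $E_6\equiv 1\pmod 7$, $E_{12}\equiv 1\pmod{13}$), which you never invoke. Second, even after that repair the difference is congruent to a nonzero \emph{constant}, not to $0$: from $E_4^3-E_6^2=1728\Delta$ one gets $j_1\equiv 1/\Delta+6\pmod 7$, and similarly $j_1\equiv 1/\Delta+5\pmod{13}$. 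This is still enough for $c_1(n)\equiv c_p(n)$ when $n\ge 1$, but your assertion that the difference vanishes identically is false. The paper's clean route is to write explicit identities of the form
\[
\frac{E_4^3 - c\cdot 691\,E_{12}}{\Delta}=\alpha\, j_1+\beta
\]
with integers $\alpha,\beta$, and then exploit $65520=2^4\cdot 3^2\cdot 5\cdot 7\cdot 13$ so that $691E_{12}\equiv \text{(integer constant)}\pmod p$ for each $p\in\{7,13\}$; choosing $c$ so that $\alpha\equiv 0\pmod p$ makes the congruence $j_1\equiv 1/\Delta+\text{const}\pmod p$ drop out immediately.
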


\begin{proof}
The claims (1) and (4) immediately follow from the fact that $j_1(\tau) \equiv \frac{1}{q(q)_\infty^{24}} \pmod{p}$ and that $(q^{pk})_\infty \equiv (q^k)_\infty^p \pmod{p}$ for $p \in \{3,5\}$. For claim (2), we obtain
\begin{align*}
	j_6(\tau) &= \frac{1}{q} \frac{(q^2)_\infty^3 (q^3)_\infty^9}{(q)_\infty^3 (q^6)_\infty^9} \equiv \frac{1}{q} \frac{(q^2)_\infty^3 (q)_\infty^{27}}{(q)_\infty^3 (q^2)_\infty^{27}} = \frac{1}{q} \frac{(q)_\infty^{24}}{(q^2)_\infty^{24}} = j_2(\tau) \pmod{3},\\
	j_{18}(\tau) &= \frac{1}{q} \frac{(q^6)_\infty (q^9)_\infty^3}{(q^3)_\infty (q^{18})_\infty^3} \equiv \frac{1}{q} \frac{(q^2)_\infty^3 (q)_\infty^{27}}{(q)_\infty^3 (q^2)_\infty^{27}} = \frac{1}{q} \frac{(q)_\infty^{24}}{(q^2)_\infty^{24}} = j_2(\tau) \pmod{3}.
\end{align*}

For claim (3), we have
\begin{align*}
	j_{12}(\tau) &= \frac{1}{q} \frac{(q^4)_\infty^4 (q^6)_\infty^2}{(q^2)_\infty^2 (q^{12})_\infty^4} \equiv \frac{1}{q} \frac{(q^4)_\infty^4 (q^2)_\infty^6}{(q^2)_\infty^2 (q^4)_\infty^{12}} = \frac{1}{q} \frac{(q^2)_\infty^4}{(q^4)_\infty^8} \pmod{3}.
\end{align*}
By the 2-dissection formula in \cite[(34.1.18)]{Hirschhorn2017}, 
\[
	\frac{(q^3)_\infty^3 (q^4)_\infty (q^{12})_\infty}{(q)_\infty (q^6)_\infty^2} = \frac{(q^4)_\infty^4}{(q^2)_\infty^2} + q \frac{(q^{12})_\infty^4}{(q^6)_\infty^2},
\]
we can show that
\[
	\frac{(q)_\infty^8}{(q^4)_\infty^8} \equiv \frac{(q^2)_\infty^4}{(q^4)_\infty^8} + q \pmod{3},
\]
which implies $j_4(\tau) \equiv j_{12}(\tau) + 1 \pmod{3}$.

For claim (5), we have
\[
	j_{10}(\tau) = \frac{1}{q} \frac{(q^2)_\infty (q^5)_\infty^5}{(q)_\infty (q^{10})_\infty^5} \equiv \frac{1}{q} \frac{(q^2)_\infty (q)_\infty^{25}}{(q)_\infty (q^2)_\infty^{25}} = \frac{1}{q} \frac{(q)_\infty^{24}}{(q^2)_\infty^{24}} = j_2(\tau) \pmod{5}.
\]

For the last claims (6) and (7), we recall the equations
\begin{align*}
	\frac{E_4(\tau)^3 - 3 \cdot 691 E_{12}(\tau)}{\Delta(\tau)} &= -2072 j_1(\tau) + 1296000 = - \frac{2072}{q} - 245568 + O(q),\\
	\frac{E_4(\tau)^3 - 7 \cdot 691 E_{12}(\tau)}{\Delta(\tau)} &= -4836j_1(\tau) + 3024000 = -\frac{4836}{q} - 573984 + O(q),
\end{align*}
where
\[
	E_{12}(\tau) = 1 + \frac{65520}{691} \sum_{n=1}^\infty \bigg(\sum_{d \mid n} d^{11}\bigg) q^n
\]
is the weight 12 Eisenstein series. These can be proved by comparing the first two Fourier coefficients, as both sides are (weakly holomorphic) modular forms of weight $0$, which take the form of a polynomial in $j_1$. They imply that
\[
	j_1(\tau) - \frac{1}{q} \frac{1}{(q)_\infty^{24}} \equiv 6 \pmod{7}, \qquad j_1(\tau) - \frac{1}{q} \frac{1}{(q)_\infty^{24}} \equiv 5 \pmod{13},
\]
that is, $j_1(\tau) \equiv j_7(\tau) + 6 \pmod{7}$ and $j_1(\tau) \equiv j_{13}(\tau) + 5 \pmod{13}$. The first congruences of (1) and (4) can also be shown in the same form. The key point here is that $65520 = 2^4 \cdot 3^2 \cdot 5 \cdot 7 \cdot 13$.
\end{proof}

Even when focusing solely on these congruences between hauptmoduln, they appear non-trivial based on their definition as generators of the field of modular functions. In some cases, proving these congruences requires non-trivial dissection formulas, further indicating that the relationships are not obvious.

Additionally, various congruences modulo primes for the coefficients of mock theta functions can also be investigated as summarized in \cite{Andersen2014}. For instance, by comparing the 3rd order mock theta function $f_3(q)$ defined in \eqref{def:f3} with Ramanujan's another 3rd order mock theta function
\begin{align}
	\chi_3(q) \coloneqq \sum_{n=0}^\infty \frac{q^{n^2} (-q,q)_n}{(-q^3, q^3)_n} = 1 + q + q^2 + q^6 + q^7 - q^{10} + \cdots,
\end{align}
we immediately obtain $c_{f_3}(n) \equiv c_{\chi_3}(n) \pmod{3}$ from the congruence
\[
	\frac{(-q,q)_n}{(-q^3, q^3)_n} \equiv \frac{(-q,q)_n}{(-q,q)_n^3} = \frac{1}{(-q,q)_n^2} \pmod{3}.
\]
As a natural question, we may ask whether similar comprehensive relationships between the coefficients of hauptmoduln or weakly holomorphic modular forms and odd-order mock theta functions can also be found modulo odd primes. In addition, \cref{thm:main-3} (5) is currently a compromise result that lacks uniformity seen in our other results. Below are the first terms of $n$ up to 30 such that $c_5(40n+15) \equiv 1 \pmod{2}$.
\[
	n = 0, 1, 3, 4, 5, 6, 7, 10, 11, 12, 15, 18, 21, 22, 23, 24, 25, 28, 29, \dots.
\]
Is there a relationship between the coefficients $c_5(40n+15)$ and the coefficients of mock theta functions?


\bibliographystyle{amsalpha}
\bibliography{References} 

\end{document}